\def\R{\mathbb{R}}
\def\St{\mathrm{St}}
\def\so{\mathfrak{so}}
\def\O{\mathrm{O}}
\def\D{\mathrm{d}}
\def\I{\mathrm{I}}
\def\v{\mathfrak{v}}
\def\h{\mathfrak{h}}
\def\RS{\mathcal{RS}}
\DeclareMathOperator{\Exp}{Exp}
\DeclareMathOperator{\expm}{exp_m}
\DeclareMathOperator{\logm}{log_m}
\DeclareMathOperator{\tr}{tr}
\DeclarePairedDelimiterX{\norm}[1]{\lVert}{\rVert}{#1}
\newcommand{\kibitz}[2]{\ifnum\Comments=1\textcolor{#1}{#2}\fi}
\begin{document}
\title{Efficient Quasi-Geodesics on the Stiefel Manifold}
%
%
\author{Thomas Bendokat* \and Ralf Zimmermann}
\authorrunning{T. Bendokat \and R. Zimmermann}
%
\institute{Department of Mathematics and Computer Science, University of Southern Denmark~(SDU), Odense, Denmark\\
\email{\{bendokat,zimmermann\}@imada.sdu.dk}}
\maketitle              
\begin{abstract}
%
Solving the so-called geodesic endpoint problem, i.e., finding a geodesic that connects two given points on a manifold, is at the basis of virtually all data processing operations, including averaging, clustering, interpolation and optimization. 
On the Stiefel manifold of orthonormal frames, this problem is computationally involved.
A remedy is to use quasi-geodesics as a replacement for the Riemannian geodesics.
Quasi-geodesics feature constant speed and covariant acceleration with constant (but possibly non-zero) norm. For a well-known type of quasi-geodesics, we derive a new representation that is suited for large-scale computations. Moreover, we introduce a new kind of quasi-geodesics that turns out to be much closer to the Riemannian geodesics.

\keywords{Stiefel manifold  \and Geodesic \and Quasi-geodesic  \and Geodesic endpoint problem.}
\end{abstract}
%
%
%
\section{Introduction}
Connecting two points on the Stiefel manifold with a geodesic requires the use of an iterative algorithm \cite{Zimmermann2017}, which raises issues such as convergence and computational costs. An alternative is to use \emph{quasi-geodesics} \cite{BatistaKrakowskiLeite2017,JurdjevicMarkinaLeite2019,KrakowskiMachadoLeiteBatista2017,MachadoLeiteBatzies2020,NishimoriAkaho2005}. The term is used inconsistently. Here, we mean {\em curves with constant speed and covariant acceleration with constant (but possibly non-zero) norm}. 
The term quasi-geodesics is motivated by the fact that actual geodesics feature a constant-zero covariant acceleration.
Such quasi-geodesics have been considered in \cite{BatistaKrakowskiLeite2017,JurdjevicMarkinaLeite2019,KrakowskiMachadoLeiteBatista2017,MachadoLeiteBatzies2020}, where a representation of the Stiefel manifold with square matrices was used.

We introduce an economic way to compute these quasi-geodesics at considerably reduced computational costs. Furthermore, we propose a new kind of quasi-geodesics, which turn out to be closer to the true Riemannian geodesic but come at a slightly higher computational cost than the aforementioned economic quasi-geodesics. Both kinds of quasi-geodesics can be used for a wide range of problems, including optimization and interpolation of a set of points.

\section{The Stiefel Manifold}
This introductory exposition follows mainly \cite{EdelmanAriasSmith1999}.
The manifold of orthonormal frames in $\R^{n \times p}$, i.e. the \emph{Stiefel manifold}, is 
    $\St(n,p) := \left\lbrace U \in \R^{n \times p} \ \middle|\ U^TU=\I_p \right\rbrace$.
The \emph{tangent space} at any point $U \in \St(n,p)$ can be parameterized as
\begin{equation*}
    T_U\St(n,p) := \left\lbrace UA+U_\perp B \ \middle|\ A \in \so(p),\ B  \in \R^{(n-p)\times p} \right\rbrace,
\end{equation*}
where $\so(p)$ denotes the real $p\times p$ skew-symmetric matrices and $U_\perp$ denotes an arbitrary but fixed orthonormal completion of $U$ such that $\begin{pmatrix} U & U_\perp \end{pmatrix}\in \O(n)$. 
A Riemannian metric on $T_U\St(n,p)$ is induced by the \emph{canonical} inner product
\begin{equation*}
    g_U \colon T_U\St(n,p) \times T_U\St(n,p) \to \R,\ g_U(\Delta_1,\Delta_2):=\tr\big(\Delta_1^T(\I_n-\frac12 UU^T)\Delta_2\big).
\end{equation*}
The metric defines geodesics, i.e. locally shortest curves. The \emph{Riemannian exponential} gives the geodesic from a point $U \in \St(n,p)$ in direction $\Delta = UA+U_\perp B \in T_U\St(n,p)$.
Via the QR-decomposition $(\I_n-UU^T)\Delta = QR$, with $Q \in \St(n,p)$ and $R\in \R^{p \times p}$, it can be calculated as
\begin{equation*}
    \Exp_U(t\Delta):=\begin{pmatrix} U & U_\perp \end{pmatrix}\expm t\begin{pmatrix}A & -B^T\\ B & 0\end{pmatrix} \begin{pmatrix}\I_p \\ 0 \end{pmatrix}
    = \begin{pmatrix} U & Q \end{pmatrix}\expm t\begin{pmatrix}A & -R^T\\ R & 0\end{pmatrix}\begin{pmatrix}\I_p \\ 0 \end{pmatrix},
\end{equation*}
where $\expm$ denotes the matrix exponential.
The inverse problem, i.e. given $U,\tilde{U}\in \St(n,p)$, find $\Delta\in T_U\St(n,p)$ with
$\tilde{U} = \Exp_U(\Delta)$, is called the \emph{geodesic endpoint problem} and is associated with 
computing the Riemannian logarithm \cite{Zimmermann2017}. There is no known closed formula. Yet, as suggested in \cite{BatistaKrakowskiLeite2017,KrakowskiMachadoLeiteBatista2017}, one can exploit the quotient relation between $\St(n,p)$ and the 
\emph{Grassmann manifold} \cite{EdelmanAriasSmith1999,BendokatZimmermanAbsil2020} of $p$-dimensional subspaces of $\R^n$: Let $U,\tilde{U} \in \St(n,p)$. Then the columns of $U$ and $\tilde{U}$ span subspaces, i.e. points on the Grassmannian. For the Grassmannian, the Riemannian logarithm is known \cite{BendokatZimmermanAbsil2020}, which means that we know how to find $U_\perp B \in T_U\St(n,p)$ and $R \in \O(p)$ such that
    $\tilde{U}R = \begin{pmatrix} U & U_\perp\end{pmatrix}\expm\begin{psmallmatrix}0 & \smash{-B^T}\\ B & 0\end{psmallmatrix}\begin{psmallmatrix}\I_p \\ 0 \end{psmallmatrix}$.
Denote $A:=\logm(R^T)$, where $\logm$ is the principle matrix logarithm. Then
\begin{equation}
\label{eq:observation}
    \tilde{U} = \begin{pmatrix} U & U_\perp\end{pmatrix}\expm\begin{pmatrix}0 & -B^T\\ B & 0\end{pmatrix}\begin{pmatrix}\I_p \\ 0 \end{pmatrix}\expm(A).
\end{equation} 
If there is a Stiefel geodesic from $U$ to $\tilde{U}$, then there is also $U\tilde{A} + U_\perp \tilde{B} \in T_U\St(n,p)$ such that
\begin{equation}
\label{eq:observation2}
    \tilde{U} = \begin{pmatrix} U & U_\perp\end{pmatrix}\expm\begin{pmatrix}\tilde{A} & -\tilde{B}^T\\ \tilde{B} & 0\end{pmatrix}\begin{pmatrix}\I_p \\ 0 \end{pmatrix}.
\end{equation} 
Given $U,\tilde{U}$, we cannot find $U\tilde{A} + U_\perp \tilde{B} \in T_U\St(n,p)$ directly for \eqref{eq:observation2}, but we can find $UA + U_\perp B \in T_U\St(n,p)$ for \eqref{eq:observation}. On the other hand, given $U \in \St(n,p)$ and $\Delta = UA + U_\perp B \in T_U\St(n,p)$, we can define a $\tilde{U} \in \St(n,p)$ via \eqref{eq:observation}.

\section{Quasi-Geodesics on the Stiefel Manifold}
We first reduce the computational effort associated with the quasi-geodesics of
\cite{KrakowskiMachadoLeiteBatista2017,JurdjevicMarkinaLeite2019,BatistaKrakowskiLeite2017}. Then, we introduce a new technique to construct quasi-geodesics.

\subsection{Economy-size Quasi-Geodesics}
Similarly to \cite{KrakowskiMachadoLeiteBatista2017,JurdjevicMarkinaLeite2019,BatistaKrakowskiLeite2017}, we use the notion of a retraction \cite{AbsilMahonySepulchre2008} as a starting point of the construction.
Let $M$ be a smooth manifold with tangent bundle $TM$. A \emph{retraction} is a smooth mapping $R\colon TM \to M$ with the following properties:

1.) $R_x(0)=x$, i.e. $R$ maps the zero tangent vector at $x\in M$ to $x$.

2.)  The derivative at $0$, $\D R_x(0)$ is the identity mapping on $T_0T_xM \simeq T_xM$.\\
Here, $R_x$ denotes the restriction of $R$ to $T_xM$.
An example of a retraction is the Riemannian exponential mapping. On the Stiefel manifold, we can for any retraction $R\colon T\St(n,p) \to \St(n,p)$ and any tangent vector $\Delta \in T_U\St(n,p)$ define a smooth curve $\gamma_\Delta\colon t \mapsto R_U(t\Delta)$, which fulfills $\gamma_\Delta(0) = U$ and $\dot\gamma_\Delta(0)=\Delta$. The essential difference to  \cite{KrakowskiMachadoLeiteBatista2017,JurdjevicMarkinaLeite2019,BatistaKrakowskiLeite2017} is that we work mainly with $n \times p$ matrices instead of $n\times n$ representatives,
which entails a considerable cost reduction, when $p\leq \frac{n}{2}$.


The idea is to connect the subspaces spanned by the Stiefel manifold points with the associated Grassmann geodesic, while concurrently moving along the equivalence classes to start and end at the correct Stiefel representatives. This principle is visualized in \cite[Fig. 1]{BatistaKrakowskiLeite2017}. We define the economy-size quasi-geodesics similarly to \cite[Prop. 6 and Thm. 7]{KrakowskiMachadoLeiteBatista2017}.

\begin{proposition}
\label{prop:simple_retraction}
    Let $U \in \St(n,p)$ and $\Delta = UA+U_\perp B \in T_U\St(n,p)$ with compact SVD $(\I_n-UU^T)\Delta = U_\perp B \overset{\text{\tiny SVD}}{=}Q\Sigma V^T$. The mapping $\RS \colon T\St(n,p)\to \St(n,p)$, defined by $\Delta$ maps to $\RS_U(\Delta) := (UV\cos(\Sigma) + Q\sin(\Sigma))V^T\expm(A)$, is a retraction with corresponding quasi-geodesic
    \begin{equation}
    \label{eq:simple_quasigeodesic}
        \gamma(t) = \RS_U(t\Delta)=(UV\cos(t\Sigma) + Q\sin(t\Sigma))V^T\expm(tA).
    \end{equation}
    An orthogonal completion of $\gamma(t)$ is $\gamma_\perp(t)=\begin{pmatrix} U & U_\perp \end{pmatrix}\expm\left(t \begin{psmallmatrix} 0 & \smash{-B^T}\\ B & 0 \end{psmallmatrix}\right) \begin{psmallmatrix}0\\ \I_{n-p}\end{psmallmatrix}$. The quasi-geodesic $\gamma$ has the following properties:
    \begin{enumerate}
     \item $\gamma(0) = U$
     \item $\dot\gamma(t) 
     =\gamma(t)A+\gamma_\perp(t)B\expm(tA)$
     \item $\norm{\dot\gamma(t)}^2=\frac{1}{2}\tr(A^TA) + \tr(B^TB)$ (constant speed)
     \item $\ddot\gamma(t) = \gamma(t)(A^2-\expm(tA^T)B^TB\expm(tA)) + 2\gamma_\perp(t)BA\expm(tA)$
     \item $D_t\dot\gamma(t) = \gamma_\perp(t)BA\expm(tA)$
     \item $\norm{D_t\dot\gamma(t)}^2 = \norm{BA}_F^2$ (constant-norm covariant acceleration)
    \end{enumerate}
    Furthermore, $\gamma$ is a geodesic if and only if $BA=0$.
\end{proposition}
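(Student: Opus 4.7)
The plan is to verify the six properties by direct computation from the closed-form expressions for $\gamma$ and $\gamma_\perp$, using three ingredients: (i) the SVD identity $B = \tilde Q\Sigma V^T$ with $\tilde Q := U_\perp^TQ\in\St(n-p,p)$ and $V\in\O(p)$, which in particular yields $B^TB = V\Sigma^2V^T$; (ii) the orthogonality $(\gamma(t),\gamma_\perp(t))\in\O(n)$, inherited from the matrix exponential of the skew block and from $V^T\expm(tA)\in\O(p)$; and (iii) the commutativity of $A$ with $\expm(tA)$ together with $\expm(tA)\in\O(p)$, since $A\in\so(p)$.

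I would first expand the block matrix exponential $\expm(t\Xi)$, $\Xi := \begin{psmallmatrix}0 & -B^T \\ B & 0\end{psmallmatrix}$, in closed form via its even/odd power structure. This produces explicit block expressions involving $V\cos(t\Sigma)V^T$, $\tilde Q\sin(t\Sigma)V^T$, and $\tilde Q\cos(t\Sigma)\tilde Q^T + \tilde Q_\perp\tilde Q_\perp^T$, which immediately confirms the stated form of $\gamma_\perp(t)$ and gives $\gamma(0)=U$ as well as $\gamma(t)\in\St(n,p)$ by inspection. Differentiating $\gamma$ by the product rule yields
\[\dot\gamma(t) = \bigl(-UV\sin(t\Sigma)\Sigma + Q\cos(t\Sigma)\Sigma\bigr)V^T\expm(tA) + \gamma(t)A,\]
and the first summand equals $\gamma_\perp(t)B\expm(tA)$ because right-multiplication by $B = \tilde Q\Sigma V^T$ annihilates the $U_\perp\tilde Q_\perp\tilde Q_\perp^T$ piece of $\gamma_\perp$ via $\tilde Q_\perp^T\tilde Q = 0$. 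A second differentiation, using $V\Sigma^2V^T = B^TB$ and $A\expm(tA) = \expm(tA)A$, delivers property 4. Applying the canonical-metric norm $\|UX + U_\perp Y\|^2 = \tfrac12\tr(X^TX) + \tr(Y^TY)$ to the decomposition in property 2 and eliminating the orthogonal factor $\expm(tA)$ under the cyclic trace yields property 3.

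For properties 5 and 6 the Levi--Civita connection of the canonical metric on $\St(n,p)$ \cite{EdelmanAriasSmith1999} is needed, since it differs from the Euclidean tangent projection. The decomposition $\dot\gamma = \gamma A + \gamma_\perp K(t)$ with constant skew $A$ and $K(t) = B\expm(tA)$ has $\dot K(t) = BA\expm(tA)$, and is tailored so that the connection's Christoffel correction pairs the constant $A$ antisymmetrically with itself and cancels, leaving $D_t\dot\gamma = \gamma_\perp\dot K = \gamma_\perp BA\expm(tA)$. Property 6 then follows from the canonical norm of this expression via the orthogonality of $\expm(tA)$, and the final claim is immediate: since $\gamma_\perp(t)$ has orthonormal columns and $\expm(tA)$ is invertible, $D_t\dot\gamma\equiv 0$ iff $BA = 0$. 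The main obstacle is the Christoffel computation; an appealing alternative is to exploit the factorization $\gamma(t) = \gamma_0(t)\expm(tA)$ with $\gamma_0(t) = (UV\cos(t\Sigma)+Q\sin(t\Sigma))V^T$ a horizontal Stiefel geodesic of the Riemannian submersion $\St(n,p)\to\Gr(n,p)$ (hence $D_t\dot\gamma_0 = 0$), and to track the contribution of the variable right-factor $\expm(tA)\in\O(p)$ separately.
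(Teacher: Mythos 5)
Your outline is correct and follows essentially the same route as the paper: identify $\gamma$ with the lifted Grassmann geodesic times $\expm(tA)$ (the paper cites the Edelman--Arias--Smith geodesic formula where you expand the block exponential by hand -- same content), differentiate the joint curve $t\mapsto(\gamma(t),\gamma_\perp(t))$, and read off the speed from the canonical norm. The one place where your proposal stops short of a proof is precisely the step you yourself flag as the main obstacle: properties 5 and 6. You assert that the connection's correction terms cancel everything except $\gamma_\perp\dot K$, but give no mechanism. The paper does not redo a Christoffel computation; it quotes the closed-form expression for the covariant derivative along a curve under the canonical metric from \cite[eq.~(2.41),~(2.48)]{EdelmanAriasSmith1999},
\begin{equation*}
D_t\dot\gamma(t)=\ddot\gamma(t)+\dot\gamma(t)\dot\gamma(t)^T\gamma(t)+\gamma(t)\left((\gamma(t)^T\dot\gamma(t))^2+\dot\gamma(t)^T\dot\gamma(t)\right),
\end{equation*}
into which your formulas substitute in two lines: with $K(t)=B\expm(tA)$ one gets $\gamma^T\dot\gamma=A$, $\dot\gamma^T\dot\gamma=-A^2+K^TK$, $\dot\gamma\dot\gamma^T\gamma=-\gamma A^2-\gamma_\perp KA$, and summing with $\ddot\gamma=\gamma(A^2-K^TK)+2\gamma_\perp KA$ leaves exactly $\gamma_\perp KA=\gamma_\perp BA\expm(tA)$. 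So the cancellation you claim is real, but you should cite or derive this formula rather than appeal to an unspecified antisymmetric pairing; your fallback via the Riemannian submersion $\St(n,p)\to\Gr(n,p)$ and the right factor $\expm(tA)$ is unnecessary and would itself need a lemma on how $D_t$ interacts with a time-dependent $\O(p)$-action. Two small remarks: evaluating your property-2 formula at $t=0$ gives $\dot\gamma(0)=UA+Q\Sigma V^T=\Delta$, which is all that is needed for the retraction condition $\D\RS_U(0)=\id$ (the paper checks this separately), and the final equivalence should conclude via $\norm{D_t\dot\gamma}^2=\norm{BA\expm(tA)}_F^2=\norm{BA}_F^2$ exactly as you indicate.
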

    \begin{proof}
        The fact that $\RS_U(0)=U$ for all $U \in \St(n,p)$ is obvious. Furthermore
        \begin{align*}
            \D\RS_U(0)(\Delta)
            =\ &(-UV\sin(\varepsilon\Sigma)\Sigma+Q\cos(\varepsilon\Sigma)\Sigma)V^T\exp_m(\varepsilon A) \big\vert_{\varepsilon=0}\\
            &+ (UV\cos(\varepsilon\Sigma)+ Q\sin(\varepsilon\Sigma))V^T\expm(\varepsilon A)A \big\vert_{\varepsilon=0}\\
            =\ & Q\Sigma V^T + U VV^T A = \Delta,
        \end{align*}
        so $\RS$ is a retraction. 
        Note that $\gamma$ can also be written as
        \begin{align*}
            \gamma(t)
            &=\begin{pmatrix} U & U_\perp \end{pmatrix}\expm\left(t \begin{pmatrix} 0 & -B^T\\ B & 0 \end{pmatrix}\right) \begin{pmatrix}\expm(tA)\\ 0\end{pmatrix},
        \end{align*}
        by comparison with the Grassmann geodesics in \cite[Thm. 2.3]{EdelmanAriasSmith1999}. Therefore one possible orthogonal completion is given by the stated formula. The formulas for $\dot\gamma(t)$ and $\ddot\gamma(t)$ can be calculated by taking the derivative of
        \begin{equation*}
           t\mapsto \begin{pmatrix}\gamma(t) & \gamma_\perp(t)\end{pmatrix} =\begin{pmatrix} U & U_\perp \end{pmatrix}\expm\left(t \begin{pmatrix} 0 & -B^T\\ B & 0 \end{pmatrix}\right) \begin{pmatrix}\expm(tA) & 0\\ 0 & \I_{n-p}\end{pmatrix}.
        \end{equation*}
        It follows that 
        $\norm{\dot\gamma(t)}^2
        = \tr(\dot\gamma(t)^T(\I_n-\frac{1}{2}\gamma(t)\gamma(t)^T)\dot\gamma(t))=
        \frac{1}{2}\tr(A^TA) + \tr(B^TB).$ To calculate the covariant derivative $D_t\dot\gamma(t)$, we use $\gamma(t)^T\gamma_\perp(t)=0$ and the formula for the covariant derivative of $\dot\gamma$ along $\gamma$ from \cite[eq. (2.41), (2.48)]{EdelmanAriasSmith1999},
        \begin{equation}
        \label{eq:covariant_derivative}
            D_t\dot\gamma(t)=\ddot\gamma(t)+\dot\gamma(t)\dot\gamma(t)^T\gamma(t)+\gamma(t)\left((\gamma(t)^T\dot\gamma(t))^2+\dot\gamma(t)^T\dot\gamma(t)\right),
        \end{equation}
        cf. \cite{KrakowskiMachadoLeiteBatista2017}. Since it features constant speed and constant-norm covariant acceleration, $\gamma$ is a quasi-geodesic. It becomes a true geodesic if and only if $BA=0$. \qed
    \end{proof}

Connecting $U, \tilde{U} \in \St(n,p)$ with a quasi-geodesic from Proposition \ref{prop:simple_retraction} requires the inverse of $\RS_U$. Since $\RS_U$ is the Grassmann exponential -- lifted to the Stiefel manifold -- followed by a change of basis, we can make use of the modified algorithm from \cite{BendokatZimmermanAbsil2020} for the Grassmann logarithm. The procedure is stated in Algorithm \ref{alg:simple_quasigeodesic}. Proposition \ref{prop:QG} confirms that it yields a quasi-geodesic.

 \begin{algorithm}
 \label{alg:simpleQG}
    \caption{Economy-size quasi-geodesic between two given points}
    \label{alg:simple_quasigeodesic}
    \begin{algorithmic}[1]
        \Require $U, \tilde{U} \in \St(n,p)$
        \State $\widetilde{Q} \widetilde{S} \widetilde{R}^T \overset{\text{\tiny SVD}}{:=} \tilde{U}^T U$  \Comment{SVD}
        \State $R:=\widetilde{Q}\widetilde{R}^T$
        \State $\tilde{U}_* := \tilde{U}R$ \Comment{Change of basis in the subspace spanned by $\tilde{U}$}
        \State $A := \logm(R^T)$
        \State $Q S V^T \overset{\text{\tiny SVD}}{:=}(\I_n-UU^T)\tilde{U}_*$ \Comment{compact SVD}
        \State $\Sigma := \arcsin(S)$ \Comment{element-wise on the diagonal}
        \Ensure $\gamma(t)=(UV\cos(t\Sigma) + Q\sin(t\Sigma))V^T\expm(tA)$
    \end{algorithmic}
\end{algorithm}

\begin{proposition}
\label{prop:QG}
    Let $U, \tilde{U} \in \St(n,p)$. Then Algorithm \ref{alg:simple_quasigeodesic} returns a quasi-geodesic $\gamma$ connecting $U$ and $\tilde{U}$, i.e. $\gamma(0)=U$ and $\gamma(1)= \tilde{U}$, in direction $\dot\gamma(0)=UA+Q\Sigma V^T$ and of length $L(\gamma)=(\frac{1}{2}\tr(A^TA) + \tr(\Sigma^2))^{\frac{1}{2}}$.
\end{proposition}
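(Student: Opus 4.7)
The plan is to interpret the output of Algorithm \ref{alg:simple_quasigeodesic} as a specific instance of $\RS_U(t\Delta)$ from Proposition \ref{prop:simple_retraction}, so that its quasi-geodesic status, together with the formulas for $\dot\gamma(0)$ and the constant speed (and hence the length), is inherited directly. To do so, I would first verify that $\Delta := UA + Q\Sigma V^T$ lies in $T_U\St(n,p)$: the matrix $R = \widetilde{Q}\widetilde{R}^T$ is orthogonal as a product of two elements of $\O(p)$, so its principal logarithm $A = \logm(R^T)$ is skew-symmetric (assuming $R$ avoids the eigenvalue $-1$, so that the principal $\logm$ applies); and $U^TQ = 0$ because the columns of $Q$ lie in the image of $\I_n - UU^T$. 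Hence $(\I_n-UU^T)\Delta = Q\Sigma V^T$ is precisely the compact SVD required by Proposition \ref{prop:simple_retraction}, and the curve output by the algorithm coincides with $\RS_U(t\Delta)$. This already gives $\gamma(0)=U$, $\dot\gamma(0) = UA + Q\Sigma V^T$, and constant speed $\|\dot\gamma(t)\|^2 = \frac{1}{2}\tr(A^TA) + \tr(B^TB)$, where $U_\perp B := Q\Sigma V^T$ yields $\tr(B^TB) = \tr(V\Sigma^2V^T) = \tr(\Sigma^2)$; constant speed then gives the claimed length $L(\gamma) = \|\dot\gamma(0)\|$.

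The only remaining nontrivial point is $\gamma(1) = \tilde{U}$. Evaluating the output at $t=1$ and using $\expm(A)=R^T$ together with $\tilde{U}_* = \tilde{U}R$, this reduces to the identity
\begin{equation*}
    UV\cos(\Sigma)V^T + Q\sin(\Sigma)V^T = \tilde{U}_*.
\end{equation*}
Splitting $\tilde{U}_* = UU^T\tilde{U}_* + (\I_n-UU^T)\tilde{U}_*$ and invoking $(\I_n-UU^T)\tilde{U}_* = QSV^T = Q\sin(\Sigma)V^T$, the claim boils down to the purely $p\times p$ identity $V\cos(\Sigma)V^T = U^T\tilde{U}_*$.

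To establish this identity I would exploit the SVD $\tilde{U}^TU = \widetilde{Q}\widetilde{S}\widetilde{R}^T$ of Step 1 to compute $U^T\tilde{U}_* = \widetilde{R}\widetilde{S}\widetilde{Q}^T\widetilde{Q}\widetilde{R}^T = \widetilde{R}\widetilde{S}\widetilde{R}^T$, which is symmetric positive semidefinite. Squaring, and using $\tilde{U}_*^T\tilde{U}_* = \I_p$ together with $\tilde{U}_*^T(\I_n-UU^T)\tilde{U}_* = VS^2V^T$, gives
\begin{equation*}
    \widetilde{R}\widetilde{S}^2\widetilde{R}^T = (U^T\tilde{U}_*)^2 = \I_p - VS^2V^T = V(\I_p - S^2)V^T.
\end{equation*}
By uniqueness of the positive semidefinite square root, $\widetilde{R}\widetilde{S}\widetilde{R}^T = V\sqrt{\I_p-S^2}\,V^T = V\cos(\arcsin S)V^T = V\cos(\Sigma)V^T$, completing the argument. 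The main obstacle is precisely this step, since the raw singular vector factors $V$ and $\widetilde{R}$ need not coincide when singular values are repeated; invoking uniqueness of the positive semidefinite square root sidesteps any explicit matching of eigenvector bases.
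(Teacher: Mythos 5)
Your proof is correct, and at the top level it follows the same strategy the paper compresses into one line: recognize the algorithm's output as $\RS_U(t\Delta)$ for $\Delta = UA + Q\Sigma V^T$ and inherit the quasi-geodesic properties, initial velocity, and constant speed (hence length) from Proposition~\ref{prop:simple_retraction}. Where you genuinely diverge is in the endpoint verification $\gamma(1)=\tilde{U}$: the paper delegates this to the correctness of the Grassmann logarithm algorithm in the cited reference (steps 1--6 of Algorithm~\ref{alg:simple_quasigeodesic} are exactly that algorithm), whereas you prove the key identity $U^T\tilde{U}_* = \widetilde{R}\widetilde{S}\widetilde{R}^T = V\cos(\Sigma)V^T$ from scratch by squaring and invoking uniqueness of the positive semidefinite square root. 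This buys a self-contained argument that does not require the reader to unpack the external reference, and it cleanly handles the case of repeated singular values, where one cannot simply match the factors $V$ and $\widetilde{R}$ (you are right to flag that as the delicate point). Your computation checks out: $S=\sin(\Sigma)$ has entries in $[0,1]$ so $\cos(\Sigma)\succeq 0$, and $\widetilde{S}\succeq 0$, so both candidates are PSD square roots of $V(\I_p-S^2)V^T$. Two genericity caveats that you partially note and that the paper also leaves implicit: the principal logarithm $A=\logm(R^T)$ is real and skew-symmetric only when $R^T\in\SO(p)$ has no eigenvalue $-1$, and the identification $Q\Sigma V^T = U_\perp B$ with $\tr(B^TB)=\tr(\Sigma^2)$ uses $U^TQ=0$, exactly as you argue.
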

    \begin{proof}
      Follows from \cite[Alg. 1]{BendokatZimmermanAbsil2020}, Proposition~\ref{prop:simple_retraction} and a straightforward calculation.
    \end{proof}

\subsection{Short Economy-size Quasi-Geodesics}
To construct an alternative type of quasi-geodesics, we make the following observation: Denote $B$ in \eqref{eq:observation} by $\hat{B}$ and calculate the SVD $U_\perp \hat{B}= Q\Sigma V^T$. Furthermore, compute $R \in \O(p)$ as in Algorithm \ref{alg:simple_quasigeodesic} and denote $a:= \logm(R^T) \in \so(p)$ and $b := \Sigma V^T\in \R^{p \times p}$. Then we can rewrite \eqref{eq:observation} as
\begin{equation*}
    \begin{split}
    \tilde{U} &= \begin{pmatrix} U & Q \end{pmatrix}\expm\begin{pmatrix} 0 & -b^T\\ b & 0\end{pmatrix}\begin{pmatrix}\I_p \\ 0_{p\times p} \end{pmatrix} \expm(a)\\
    &= \begin{pmatrix} U & Q \end{pmatrix}\expm\begin{pmatrix} 0 & -b^T\\ b & 0\end{pmatrix}\expm\begin{pmatrix}a & 0\\ 0 & c\end{pmatrix}\begin{pmatrix}\I_p \\ 0 \end{pmatrix} \quad \text{ for any }c \in \so(p).
    \end{split}
\end{equation*}
Without the factor $c$, this is exactly what lead to the quasi-geodesics \eqref{eq:simple_quasigeodesic}. There are however also matrices $A \in \so(p), B\in \R^{p \times p}$ and $C \in \so(p)$ satisfying
\begin{equation}
    \begin{pmatrix} A & -B^T\\ B & C\end{pmatrix} = \logm\left(\expm\begin{pmatrix} 0 & -b^T\\ b & 0\end{pmatrix}\expm\begin{pmatrix}a & 0\\ 0 & c\end{pmatrix}\right).
\end{equation}
This implies that
\begin{equation}
    \label{eq:short_quasigeodesic}
    \rho(t) = \begin{pmatrix} U & Q \end{pmatrix}\expm\left(t\begin{pmatrix} A & -B^T\\ B & C\end{pmatrix} \right)\begin{pmatrix}\I_p \\ 0 \end{pmatrix}
\end{equation}
is a curve from $\rho(0)=U$ to $\rho(1)=\tilde{U}$. It is indeed the projection of the geodesic in $\O(n)$ from $ \begin{pmatrix} U & U_\perp \end{pmatrix}$ to $\begin{pmatrix} \tilde{U} & \tilde{U}_\perp \end{pmatrix}$ for some orthogonal completion $\tilde{U}_\perp$ of $\tilde{U}$. 
If $C=0$, then $\rho$ is exactly the Stiefel geodesic. For $x := \begin{psmallmatrix} 0 & \smash{-b^T}\\ b & 0\end{psmallmatrix}$ and $y:= \begin{psmallmatrix} a & 0\\ 0 & c\end{psmallmatrix}$
with  $\|x\| + \|y\|\leq \ln(\sqrt{2})$, we can express $C$ with help of the (Dynkin-)Baker-Campbell-Hausdorff (BCH) series $Z(x,y) = \logm(\expm(x)\expm(y))$, \cite[\S 1.3, p. 22]{rossmann2006lie}.
To get close to the Riemannian geodesics, we want to find a $c$ such that $C$ becomes small. Three facts are now helpful for the solution:
\begin{enumerate}
 \item The series $Z(x,y)$ depends only on iterated commutators $[\cdot,\cdot]$ of $x$ and $y$.
 \item Since the Grassmannian is symmetric, the Lie algebra $\so(n)$ has a Cartan decomposition 
 $\so(n)= \v \oplus \h$ with $[\v,\v] \subseteq \v,\ [\v,\h] \subseteq \h,\ [\h,\h] \subseteq \v,$ \cite[(2.38)]{EdelmanAriasSmith1999}.
 \item For $x$ and $y$ defined as above, we have $x \in \h$ and $y \in \v \subset \so(2p)$.
\end{enumerate}
Considering terms up to combined order 4 in $x$ and $y$ in the Dynkin formula and denoting the anti-commutator by $\{x,y\}= xy+yx$, the matrix $C$ is given by
\begin{align*}
 C = C(c) =\ &c + \frac1{12}\Big(2bab^T-\{bb^T,c\}\Big) - \frac{1}{24}\Big(2[c,bab^T]-[c,\{bb^T,c\}]\Big) + \text{h.o.t.}
\end{align*}
%
%
%
%
Ignoring the higher-order terms, we can consider this as a fixed point problem 
$0 = C(c) \Leftrightarrow c = -\frac1{12}(\ldots)+ \frac{1}{24}(\ldots)$.
Performing a single iteration starting from $c_0=0$ yields
\begin{equation}
    c_1 =c_1(a,b)= -\frac{1}{6}bab^T.
\end{equation}
With $\delta:= \max\{\norm{a},\norm{b}\}\leq\ln(\sqrt{2})$, where $\norm{\cdot}$ denotes the 2-norm, one can show that this choice of $c(a,b)$
produces a $C$-block with $\norm{C(c)}\leq (\frac{7}{216}+\frac{1}{1-\delta})\delta^5$.

A closer look at curves of the form \eqref{eq:short_quasigeodesic} shows the following Proposition.
\begin{proposition}
    Let $U, Q \in \St(n,p)$ with $U^T Q = 0$ and $A \in \so(p)$, $B \in \R^{p \times p}$, $C \in \so(p)$. Then the curve
    \begin{equation}
    \label{eq:short_quasigeodesic_prop}
        \rho(t) = \begin{pmatrix} U & Q \end{pmatrix}\expm\left(t\begin{pmatrix} A & -B^T\\ B & C\end{pmatrix} \right)\begin{pmatrix}\I_p \\ 0 \end{pmatrix}
    \end{equation}
    has the following properties, where $\rho_\perp(t) := \begin{pmatrix} U & Q \end{pmatrix}\expm\left(t\begin{psmallmatrix} A & -B^T\\ B & C\end{psmallmatrix} \right)\begin{psmallmatrix}0 \\ \I_p \end{psmallmatrix}$:
    \begin{enumerate}
     \item $\rho(0) = U$
     \item $\dot\rho(0)=UA + Q B \in T_U\St(n,p)$
     \item $\dot\rho(t)= \begin{pmatrix} U & Q \end{pmatrix}\expm\left(t\begin{pmatrix} A & -B^T\\ B & C\end{pmatrix} \right)\begin{pmatrix}A \\ B \end{pmatrix} = \rho(t)A+\rho_\perp(t)B \in T_{\rho(t)}\St(n,p)$
     \item $\norm{\dot\rho(t)}^2 = \frac{1}{2}\tr(A^TA) + \tr(B^TB)$ (constant speed)
     \item $\ddot\rho(t) = \begin{pmatrix} U & Q \end{pmatrix}\expm\left(t\begin{pmatrix} A & -B^T\\ B & C\end{pmatrix} \right)\begin{pmatrix}A^2-B^TB \\ BA+CB \end{pmatrix}\\
     = \rho(t)(A^2-B^TB) + \rho_\perp(t)(BA+CB)$
     \item $D_t\dot\rho(t) = \rho_\perp(t)CB$
     \item $\norm{D_t\dot\rho(t)}^2 = \norm{CB}_F^2$ (constant-norm covariant acceleration)
    \end{enumerate}
\end{proposition}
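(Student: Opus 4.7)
The plan exploits a single structural fact: the matrix $M := \begin{psmallmatrix} A & -B^T\\ B & C\end{psmallmatrix} \in \so(2p)$ is skew-symmetric, so $\expm(tM) \in \O(2p)$. Hence $\begin{pmatrix}\rho(t) & \rho_\perp(t)\end{pmatrix} = \begin{pmatrix} U & Q \end{pmatrix}\expm(tM) \in \St(n,2p)$ for every $t$, which supplies the orthogonality identities $\rho(t)^T\rho(t) = \I_p$, $\rho_\perp(t)^T\rho_\perp(t) = \I_p$, and $\rho(t)^T\rho_\perp(t) = 0$ that drive all subsequent computations.

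Properties~1, 2, 3 and~5 then reduce to differentiation of a matrix exponential. Evaluating at $t=0$ gives property~1. Using $\frac{d}{dt}\expm(tM) = \expm(tM)M$ together with $M\begin{psmallmatrix}\I_p\\0\end{psmallmatrix} = \begin{psmallmatrix}A\\B\end{psmallmatrix}$ yields $\dot\rho(t) = \rho(t)A + \rho_\perp(t)B$, giving properties~2 and~3 (tangency follows from $A \in \so(p)$). A second differentiation, using $M\begin{psmallmatrix}A\\B\end{psmallmatrix} = \begin{psmallmatrix}A^2 - B^TB\\ BA+CB\end{psmallmatrix}$, produces the expression in property~5. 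For property~4, the orthogonality identities give $\rho(t)^T\dot\rho(t) = A$ and $\dot\rho(t)^T\dot\rho(t) = A^TA + B^TB$; inserting these into the canonical metric $\tr\bigl(\dot\rho^T(\I_n - \tfrac12\rho\rho^T)\dot\rho\bigr)$ collapses the expression to $\tfrac12\tr(A^TA) + \tr(B^TB)$, independent of $t$.

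The only step that requires any care is property~6. I would insert $\rho$, $\dot\rho$ and $\ddot\rho$ into the covariant derivative formula \eqref{eq:covariant_derivative} of Proposition~\ref{prop:simple_retraction}. Exploiting $A^T = -A$ to rewrite $\dot\rho^T\dot\rho = -A^2 + B^TB$ and $\dot\rho\dot\rho^T\rho = -\rho A^2 - \rho_\perp BA$, the $\rho$-components of the three correction terms cancel exactly against $\rho(A^2 - B^TB)$ in $\ddot\rho$, and the $\rho_\perp$-components collapse to $\rho_\perp CB$. Property~7 then follows immediately: since $\rho^T\rho_\perp = 0$, one has $(\I_n - \tfrac12\rho\rho^T)(\rho_\perp CB) = \rho_\perp CB$, and with $\rho_\perp^T\rho_\perp = \I_p$ the canonical norm reduces to $\tr(B^TC^TCB) = \norm{CB}_F^2$, again constant in $t$. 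The main bookkeeping challenge is purely the sign tracking induced by the skew-symmetry of $A$ during the covariant-derivative cancellation; there is no conceptual obstacle.
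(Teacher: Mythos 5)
Your proposal is correct and follows the same route as the paper, which simply states that the proposition "can directly be checked by calculation" using the covariant derivative formula \eqref{eq:covariant_derivative}; your write-up fills in exactly that calculation (skew-symmetry of the $2p\times 2p$ block matrix, differentiation of the matrix exponential, and the cancellation in the covariant derivative), and all the intermediate identities you state check out.
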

    \begin{proof}
      This can directly be checked by calculation and making use of the formula \eqref{eq:covariant_derivative} for the covariant derivative $D_t\dot\rho(t)$.
    \end{proof}

Note that the property $U^TQ=0$ can only be fulfilled if $p\leq \frac{n}{2}$. Since they feature constant speed and constant-norm covariant acceleration, curves of the form \eqref{eq:short_quasigeodesic_prop} are \emph{quasi-geodesics}.
Now we can connect two points on the Stiefel manifold with Algorithm \ref{alg:short_quasigeodesic}, making use of $\expm\begin{psmallmatrix} 0 & -\Sigma\\ \Sigma & 0\end{psmallmatrix} = \begin{psmallmatrix} \cos\Sigma & -\sin\Sigma\\ \sin\Sigma & \cos\Sigma\end{psmallmatrix}$ for diagonal $\Sigma$.
 \begin{algorithm}
    \caption{Short economy-size quasi-geodesic between two given points}
    \label{alg:short_quasigeodesic}
    \begin{algorithmic}[1]
        \Require $U, \tilde{U} \in \St(n,p)$
        \State $\widetilde{Q} \widetilde{S} \widetilde{R}^T \overset{\text{\tiny SVD}}{:=} \tilde{U}^T U$  \Comment{SVD}
        \State $R:=\widetilde{Q}\widetilde{R}^T$
        \State $a := \logm(R^T)$
        \State $Q S V^T \overset{\text{\tiny SVD}}{:=}(\I_n-UU^T)\tilde{U}R$ \Comment{compact SVD}
        \State $\Sigma := \arcsin(S)$ \Comment{element-wise on the diagonal}
        \State $b := \Sigma V^T$
        \State $c := c(a,b) = -\frac{1}{6}bab^T$
        \State $\begin{pmatrix} A & -B^T\\ B & C\end{pmatrix} = 
        \logm\begin{pmatrix} V\cos(\Sigma)V^TR^T & -V\sin(\Sigma)\expm(c)\\ \sin(\Sigma)V^TR^T & \cos(\Sigma)\expm(c)\end{pmatrix}$
        \Ensure $\rho(t) = \begin{pmatrix} U & Q \end{pmatrix}\expm\left(t\begin{pmatrix} A & -B^T\\ B & C\end{pmatrix} \right)\begin{pmatrix}\I_p \\ 0 \end{pmatrix}$
    \end{algorithmic}
\end{algorithm}
Curves produced by Algorithm \ref{alg:short_quasigeodesic} are of the form \eqref{eq:short_quasigeodesic_prop}. We numerically verified that they are closer to the Riemannian geodesic than the economy-size quasi-geodesics of Algorithm~\ref{alg:simple_quasigeodesic} in the cases we considered. As geodesics are locally shortest curves, this motivates the term \emph{short economy-size quasi-geodesics} for the curves produced by Algorithm \ref{alg:short_quasigeodesic}.

Note that Algorithm \ref{alg:short_quasigeodesic} allows to compute the quasi-geodesic $\rho(t)$ with initial velocity $\dot\rho(0) = UA+QB$ between two given points. The opposite problem, namely finding the quasi-geodesic $\rho(t)$ given a point $U \in \St(n,p)$ with tangent vector $UA+QB \in T_U\St(n,p)$, is however not solved, since the correct $C \in \so(p)$ is missing. Nevertheless, since $\rho(t)$ is an approximation of a geodesic, the Riemannian exponential can be used to generate an endpoint.

\section{Numerical Comparison}
To compare the behaviour of the economy-size and the short economy-size quasi-geodesics, two random points on the Stiefel manifold $\St(200,30)$ with a distance of $d \in \{0.1\pi, 0.5\pi, 1.3\pi\}$ from each other are created. Then the quasi-geodesics according to Algorithms \ref{alg:simple_quasigeodesic} and \ref{alg:short_quasigeodesic} are calculated. The Riemannian distance, i.e., the norm of the Riemannian logarithm, between the quasi-geodesics and the actual Riemannian geodesic is plotted in Figure \ref{fig:numerical_behaviour}.
In all cases considered, the short quasi-geodesics turn out to be two to five orders of magnitude closer to the Riemannian geodesic than the economy-size quasi-geodesics.

\begin{figure}[h]
\centering
\includegraphics[width=.8\textwidth]{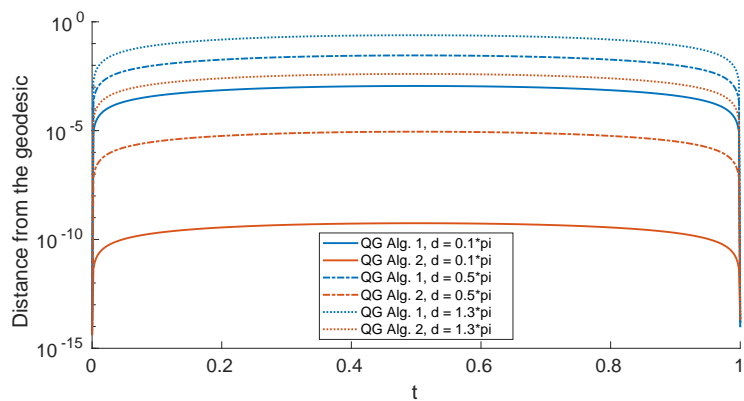}
\caption{Comparison of the distance of the quasi-geodesics (QG) to the true geodesic between two random points. The distance between the two points is denoted by $d$.}
\label{fig:numerical_behaviour}
\end{figure}

In Table \ref{tab:lengtherror}, we display the relative deviation of the length of the quasi-geodesics from the Riemannian distance between two randomly generated points at a distance of $\frac\pi2$ on $\St(200,p)$, where $p$ varies between $10$ and $100$. The outcome justifies the name \emph{short} quasi-geodesics.
\begin{table}
\centering
\caption{Comparison of the relative deviation in length of the quasi-geodesics compared to the true geodesic between two random points on $\St(200,p)$ at a distance of $\frac\pi2$ for different values of $p$. The observable $p$-dependence suggests further investigations.}
\label{tab:lengtherror}
\begin{tabular}{l@{\hskip .5cm}c@{\hskip .5cm}c}
\toprule
p & Algorithm \ref{alg:short_quasigeodesic} & Algorithm \ref{alg:simple_quasigeodesic}\\
\midrule
10 & 1.1997e-08 & 9.9308e-04\\ 20 & 3.6416e-10 & 8.6301e-04\\ 30 & 8.4590e-11 & 8.9039e-04\\ 40 & 1.9998e-11 & 7.7265e-04\\ 50 & 8.1613e-12 & 7.7672e-04\\
\bottomrule
\end{tabular}
\hskip .2cm
\begin{tabular}{l@{\hskip .5cm}c@{\hskip .5cm}c}
\toprule
p & Algorithm \ref{alg:short_quasigeodesic} & Algorithm \ref{alg:simple_quasigeodesic}\\
\midrule
60 & 4.0840e-12 & 7.2356e-04\\ 70 & 1.9134e-12 & 6.8109e-04\\ 80 & 9.9672e-13 & 6.1304e-04\\ 90 & 5.7957e-13 & 5.7272e-04\\ 100 & 2.7409e-13 & 4.9691e-04\\
\bottomrule
\end{tabular}
\end{table}

The essential difference in terms of the computational costs between the quasi-geodesics of Alg.~\ref{alg:simple_quasigeodesic}, those of Alg.~\ref{alg:short_quasigeodesic}, and
the approach in \cite[Thm. 7]{KrakowskiMachadoLeiteBatista2017} is that the they require matrix exp- and log-function evaluations of $(p\times p)$-, $(2p\times 2p)$- and
$(n\times n)$-matrices, respectively.

\section{Conclusion and Outlook}
We have proposed a new efficient representation for a well-known type of quasi-geodesics on the Stiefel manifold, which is suitable for large-scale computations and has an exact inverse to the endpoint problem for a given tangent vector. Furthermore, we have introduced a new kind of quasi-geodesics, which are much closer to the Riemannian geodesics. These can be used for endpoint problems, but the exact curve for a given tangent vector is unknown. Both kinds of quasi-geodesics can be used, e.g., for interpolation methods like De Casteljau etc. \cite{KrakowskiMachadoLeiteBatista2017,BatistaKrakowskiLeite2017}. In future work, further and more rigorous studies of the quasi-geodesics' length properties 
and the $p$-dependence displayed in Table \ref{tab:lengtherror} are of interest.
%

%
%
%
\bibliographystyle{splncs04}
\bibliography{quasigeodesicbib}

\begin{thebibliography}{10}
\providecommand{\url}[1]{\texttt{#1}}
\providecommand{\urlprefix}{URL }
\providecommand{\doi}[1]{https://doi.org/#1}

\bibitem{AbsilMahonySepulchre2008}
Absil, P.A., Mahony, R., Sepulchre, R.: Optimization Algorithms on Matrix
  Manifolds. Princeton University Press, Princeton, New Jersey (2008)

\bibitem{BatistaKrakowskiLeite2017}
{Batista}, J., {Krakowski}, K., {Leite}, F.S.: Exploring quasi-geodesics on
  {S}tiefel manifolds in order to smooth interpolate between domains. In: 2017
  IEEE 56th Annual Conference on Decision and Control (CDC). pp. 6395--6402
  (2017)

\bibitem{BendokatZimmermanAbsil2020}
Bendokat, T., Zimmermann, R., Absil, P.A.: A {G}rassmann {M}anifold {H}andbook:
  {B}asic {G}eometry and {C}omputational {A}spects. arXiv:2011.13699v2 (2020)

\bibitem{EdelmanAriasSmith1999}
Edelman, A., Arias, T.A., Smith, S.T.: The geometry of algorithms with
  orthogonality constraints. SIAM J. Matrix. Anal. Appl.  \textbf{20}(2),
  303--353 (1998)

\bibitem{JurdjevicMarkinaLeite2019}
Jurdjevic, V., Markina, I., Silva~Leite, F.: Extremal curves on {S}tiefel and
  {G}rassmann manifolds. The Journal of Geometric Analysis  (2019)

\bibitem{KrakowskiMachadoLeiteBatista2017}
Krakowski, K.A., Machado, L., {Silva Leite}, F., Batista, J.: A modified
  {C}asteljau algorithm to solve interpolation problems on {S}tiefel manifolds.
  Journal of Computational and Applied Mathematics  \textbf{311},  84 -- 99
  (2017)

\bibitem{MachadoLeiteBatzies2020}
Machado, L., Leite, F.S., Batzies, E.: Geometric algorithm to generate
  interpolating splines on {G}rassmann and {S}tiefel manifolds. In: CONTROLO
  2020. pp. 180--189. Springer International Publishing, Cham (2020)

\bibitem{NishimoriAkaho2005}
Nishimori, Y., Akaho, S.: Learning algorithms utilizing quasi-geodesic flows on
  the {S}tiefel manifold. Neurocomputing  \textbf{67},  106 -- 135 (2005)

\bibitem{rossmann2006lie}
Rossmann, W.: Lie Groups: An Introduction Through Linear Groups. Oxford
  {G}raduate {T}exts in {M}athematics, Oxford University Press (2006)

\bibitem{Zimmermann2017}
Zimmermann, R.: A matrix-algebraic algorithm for the {R}iemannian logarithm on
  the {S}tiefel manifold under the canonical metric. SIAM J. Matrix. Anal.
  Appl.  \textbf{38}(2),  322--342 (2017)

\end{thebibliography}
\end{document}